\theoremstyle{plain}
\newtheorem{thm}{Theorem}[section]
\newtheorem{lem}[thm]{Lemma}
\newtheorem{prop}[thm]{Proposition}
\theoremstyle{definition}
\newtheorem{defn}[thm]{Definition}
\theoremstyle{remark}
\setlist[enumerate,1]{leftmargin=2em}
\begin{document}

\title[Ihara zeta function, coefficients of Maclaurin series, and Ramanujan graphs]{Ihara zeta function, coefficients of Maclaurin series, and Ramanujan graphs}

\author{Hau-Wen Huang}
\address{
Department of Mathematics\\
National Central University\\
Chung-Li 32001 Taiwan
}
\email{hauwenh@math.ncu.edu.tw}

\begin{abstract}
Let $X$ denote a connected $(q+1)$-regular undirected graph of finite order $n$.
The graph $X$ is called Ramanujan whenever 
$$
|\lambda|\leq 2q^{\frac{1}{2}}
$$
for all nontrivial eigenvalues $\lambda$ of $X$. We consider the  variant  $\Xi(u)$ of the Ihara zeta function  $Z(u)$ of $X$ defined by 
\begin{gather*}
\Xi(u)^{-1}
=
 \left\{
 \begin{array}{ll}
 (1-u)(1-qu)(1-q^{\frac{1}{2}} u)^{2n-2}(1-u^2)^{\frac{n(q-1)}{2}}
 Z(u)
 \qquad
 &\hbox{if $X$ is nonbipartite},
 \\
 (1-q^2u^2)
 (1-q^{\frac{1}{2}} u)^{2n-4}
 (1-u^2)^{\frac{n(q-1)}{2}+1}
 Z(u)
 \qquad
 &\hbox{if $X$ is bipartite}.
 \end{array}
 \right.
\end{gather*}
The function $\Xi(u)$ satisfies the functional equation $\Xi(q^{-1} u^{-1})=\Xi(u)$. 
Let $\{h_k\}_{k=1}^\infty$ denote the number sequence given by
$$
\frac{d}{du}\ln \Xi(q^{-\frac{1}{2}}u)
=\sum_{k=0}^\infty h_{k+1} u^k.
$$
In this paper we establish the equivalence of the following statements: (i) $X$ is Ramanujan;
(ii) $h_k\geq 0$ for all $k\geq 1$; (iii) $h_{k}\geq 0$ for infinitely many even $k\geq 2$.
Furthermore we derive the Hasse--Weil bound for the Ramanujan graphs.
\end{abstract}
\maketitle

{\footnotesize{\bf Keywords:}  Hasse--Weil bound, Ihara zeta function, Li's criterion,  Ramanujan graphs.}

{\footnotesize{\bf MSC2020:} 05C50, 11M26.}

\section{Introduction}

The motivation of this paper originates from developing a graph theoretical counterpart of the following sufficient and necessary condition for the Riemann hypothesis. Recall that the Riemann zeta function $\zeta(s)$ is the analytic continuation of 
$$
\sum_{n=1}^\infty \frac{1}{n^s}.
$$
The negative even integers are trivial zeros of $\zeta(s)$ and the Riemann hypothesis asserts that the real part of every nontrivial zero of $\zeta(s)$ is $\frac{1}{2}$.
The Riemann xi function $\xi(s)$ is a variation of $\zeta(s)$ defined by 
$$
\xi(s)=\frac{1}{2}s(s-1)\pi^{-\frac{s}{2}}
\Gamma\left(\frac{s}{2}\right)
\zeta(s)
$$
where $\Gamma(s)$ is the Gamma function. The function $\xi(s)$ satisfies the functional equation 
$$ 
\xi(1-s)=\xi(s).
$$
Let $\{\lambda_k\}_{k=1}^\infty$ denote the number sequence given by 
$$
\frac{d}{dz}\ln \xi
\left(
\frac{1}{1-z}
\right)
=
\sum_{k=0}^\infty \lambda_{k+1} z^k.
$$
Li's criterion states that the Riemann hypothesis holds if and only if $\lambda_k\geq 0$ for all $k\geq 1$ \cite{Li:1997}.

Let $X$ denote an undirected graph of finite order allowing loops and multiple edges. 
We endow two opposite orientations on all edges of $X$ called the {\it oriented edges} of $X$. Given a vertex $x$ of $X$ the {\it valency} of $x$ is the number of oriented edges with the initial vertex $x$. If the valency of $x$ is equal to a constant $k$ for all vertices $x$ of $X$ then $X$ is said to be {\it $k$-regular}. 
The {\it adjacency matrix} $A$ of $X$ is a square matrix indexed by the vertices of $X$ whose $(x,y)$-entry is defined as the number of oriented edges from $x$ to $y$ 
for all vertices $x,y$ of $X$. 
Since $A$ is symmetric $A$ is diagonalizable with real eigenvalues. 
The eigenvalues of $A$ are also called the {\it eigenvalues} of $X$. 
A {\it walk} is a nonempty finite sequence of oriented edges which joins vertices. 
The {\it length} of a walk is the number of oriented edges in the walk.  
The graph $X$ is said to be {\it connected} if there exists a walk from $x$ to $y$ for any two distinct vertices $x,y$ of $X$.
A {\it cycle} is meant to be a walk from a vertex to itself. If all cycles on $X$ have even length then $X$ is called {\it bipartite}. 
A walk is said to have {\it backtracking} if an oriented edge is immediately followed by its opposite orientation in the walk. 
A cycle is said to be {\it geodesic} if all shifted cycles are backtrackless. 
For all $k\geq 1$ let $N_k$ denote the number of geodesic cycles on $X$ of length $k$. 
The {\it Ihara zeta function} $Z(u)$ of $X$ is the analytic continuation of 
\begin{gather}\label{zeta}
\exp
\left(\sum_{k=1}^\infty \frac{N_k}{k} u^k
\right).
\end{gather}

For the rest of this paper, we always assume that $X$ is a connected $(q+1)$-regular undirected graph of finite order $n$ with $q\geq 1$ and $n\geq 3$. 
In this case, the eigenvalues of $X$ with absolute value $q+1$ are called {\it trivial} eigenvalues and the poles of $Z(u)$ with values $\pm 1$ and $\pm q^{-1}$ are called {\it trivial} poles. The graph $X$ is said to be {\it Ramanujan} whenever 
$$
|\lambda|\leq 2q^{\frac{1}{2}}
$$
for all nontrivial eigenvalues $\lambda$ of $X$ \cite{Ramanujan1988}. The graph $X$ is Ramanujan if and only if all nontrivial poles of $Z(u)$ have the same absolute value $q^{-\frac{1}{2}}$, which is similar to the Riemann hypothesis by writing $u=q^{-s}$ \cite{Winnie2019,Terras2010}. We define the function $\Xi(u)$ by 
\begin{gather}\label{Xi}
\Xi(u)^{-1}
=
 \left\{
 \begin{array}{ll}
 (1-u)(1-qu)
 (1-q^{\frac{1}{2}}u)^{2n-2}
 (1-u^2)^{\frac{n(q-1)}{2}}
 Z(u)
 \qquad
 &\hbox{if $X$ is nonbipartite},
 \\
 (1-q^2u^2)
 (1-q^{\frac{1}{2}}u)^{2n-4}
 (1-u^2)^{\frac{n(q-1)}{2}+1}
 Z(u)
 \qquad
 &\hbox{if $X$ is bipartite}.
 \end{array}
 \right.
\end{gather}
The function $\Xi(u)$ is considered as the {\it Ihara xi function} which satisfies the functional equation 
$$
\Xi(q^{-1} u^{-1})=\Xi(u).
$$ 
If we set $u=q^{-s}$, then this becomes a functional equation relating $1-s$ and $s$ similar to the Riemann xi function.

\begin{defn}\label{defn:hk}
Let $\{h_k\}_{k=1}^\infty$ denote the number sequence given by
\begin{gather*}
\frac{d}{du}\ln \Xi(q^{-\frac{1}{2}} u)
=\sum_{k=0}^\infty h_{k+1} u^k.
\end{gather*}
\end{defn}

\noindent The main results of this paper are as follows:

\begin{thm}\label{thm:even}
If there is a positive even integer $k$ with $h_k\geq 0$ then 
$$
|\lambda|\leq
\left\{ 
\begin{array}{ll}
(1+\sqrt[k]{4 n -7})q^{\frac{1}{2}}
\qquad 
&\hbox{if $X$ is nonbipartite},
\\
(1+\sqrt[k]{2 n-7})q^{\frac{1}{2}}
\qquad 
&\hbox{if $X$ is bipartite}
\end{array}
\right.
$$
for all nontrivial eigenvalues $\lambda$ of $X$. 
\end{thm}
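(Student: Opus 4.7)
The plan is to expand $h_k$ as an explicit sum indexed by the nontrivial eigenvalues of $X$, use the hypothesis $h_k\ge 0$ to bound the contribution of the eigenvalue of largest absolute value, and then convert that bound via an elementary pointwise inequality into the stated bound on $|\lambda|$.

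First I would apply Ihara's determinant formula
\[
Z(u)^{-1} = (1-u^2)^{n(q-1)/2}\prod_\lambda(1-\lambda u + qu^2)
\]
to rewrite $\Xi(u)^{-1}$. Since $1-(q+1)u+qu^2=(1-u)(1-qu)$ and $1+(q+1)u+qu^2=(1+u)(1+qu)$, the prefactors in the definition \eqref{Xi} exactly cancel the contributions of the trivial eigenvalue $q+1$ (and $-(q+1)$ in the bipartite case). Setting $1-\lambda u+qu^2 = (1-\alpha_\lambda u)(1-\beta_\lambda u)$ with $\alpha_\lambda\beta_\lambda=q$, this leaves
\[
\Xi(u) = \prod_{\lambda\text{ nontriv}}\frac{(1-\alpha_\lambda u)(1-\beta_\lambda u)}{(1-q^{1/2}u)^2}.
\]
Taking $\tfrac{d}{du}\ln\Xi(q^{-1/2}u)$, expanding each rational summand as a geometric series, and writing $\tilde\alpha_\lambda = q^{-1/2}\alpha_\lambda$, $\tilde\beta_\lambda = q^{-1/2}\beta_\lambda$ (so $\tilde\alpha_\lambda\tilde\beta_\lambda = 1$), reading off the coefficient of $u^{k-1}$ yields
\[
h_k = \sum_{\lambda\text{ nontriv}} \bigl(2 - \tilde\alpha_\lambda^k - \tilde\beta_\lambda^k\bigr).
\]

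Next, for $k$ even I would classify the summands by the Ramanujan condition. A Ramanujan eigenvalue ($|\lambda|\le 2q^{1/2}$, so $\tilde\alpha_\lambda=e^{i\theta_\lambda}$) contributes $2-2\cos k\theta_\lambda\in[0,4]$, whereas a non-Ramanujan eigenvalue (with $\tilde\alpha_\lambda=\pm t_\lambda$, $t_\lambda>1$) contributes the strictly negative amount $2-t_\lambda^k-t_\lambda^{-k}$. Choose a nontrivial $\lambda_0$ with $|\lambda_0|$ maximal and assume $|\lambda_0|>2q^{1/2}$ (otherwise there is nothing to prove). Using the uniform lower bound $\tilde\alpha_\lambda^k+\tilde\beta_\lambda^k\ge -2$ on every $\lambda\neq\lambda_0$, the hypothesis $h_k\ge 0$ yields
\[
t_{\lambda_0}^k + t_{\lambda_0}^{-k} \le 4N-2,
\]
where $N$ denotes the number of nontrivial eigenvalues, so that $N=n-1$ in the nonbipartite case and $N=n-2$ in the bipartite case. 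In the bipartite case the spectral symmetry $\lambda\leftrightarrow -\lambda$ forces $-\lambda_0$ to contribute identically to $\lambda_0$ for even $k$, producing the sharper bound $t_{\lambda_0}^k+t_{\lambda_0}^{-k}\le 2N-2$.

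Finally, I would invert the bound on $t^k+t^{-k}$ to obtain one on $|\lambda_0|/q^{1/2}=t_{\lambda_0}+t_{\lambda_0}^{-1}$ via the elementary pointwise inequality
\[
t^k + t^{-k} \ \ge\ 1 + \bigl(t+t^{-1}-1\bigr)^k \qquad (t\ge 1,\ k\ge 1).
\]
After differentiating the difference and clearing denominators, this reduces to $1+t^2+\cdots+t^{2k-2}\ge (t^2-t+1)^{k-1}$, which in turn follows from $t^{2k-2}\ge (t^2-t+1)^{k-1}$ for $t\ge 1$ (since $t^2\ge t^2-t+1$). Applied at $t=t_{\lambda_0}$ together with the bound from the previous step, this gives $\bigl(|\lambda_0|/q^{1/2}-1\bigr)^k\le 4N-3=4n-7$ in the nonbipartite case and $\le 2N-3=2n-7$ in the bipartite case, which is precisely the stated conclusion. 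The main obstacle is the extraction of the sharp constants $4n-7$ and $2n-7$: the $-3$ inside the $k$-th root is contributed by the pointwise inequality above, while the factor of two separating the two cases is contributed by the bipartite spectral symmetry; the rest of the argument is a routine power-series computation followed by a pigeonhole-type estimate.
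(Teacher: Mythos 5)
Your proposal is correct and recovers the paper's exact constants, but its key analytic step differs from the paper's. The shared skeleton: like the paper, you reduce $h_k$ to a sum of $2-T_k(s)$ over $s\in q^{-1/2}\,{\rm Spec}^*(X)$ (your $\tilde\alpha_\lambda^k+\tilde\beta_\lambda^k$ is exactly $T_k(s)$, Proposition \ref{prop:hk}), isolate the extremal eigenvalue $\lambda_0$, and bound every other summand via $T_k(s)\geq -2$ for even $k$, arriving at $T_k(s_0)\leq 4N-2$ where $s_0=q^{-1/2}|\lambda_0|$. The divergence is in how this bound is inverted. The paper's Proposition \ref{prop1} argues by contradiction: it assumes $|s_0|>1+\sqrt[k]{4|S|-3}$, introduces the tuned parameter $\varepsilon=\bigl(\sqrt[k]{4|S|-3}-1\bigr)/\bigl(\sqrt[k]{4|S|-3}+1\bigr)$, lower-bounds $T_k$ through the binomial identity of Lemma \ref{lem4:Tk}, and shows the resulting inequality degenerates to an exact equality. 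You instead prove directly the monotone pointwise inequality $t^k+t^{-k}\geq 1+\bigl(t+t^{-1}-1\bigr)^k$ for $t\geq 1$ (equivalently $T_k(s)\geq 1+(s-1)^k$ for $s\geq 2$ and even $k$), and your verification is sound: differentiating and factoring $t^{2k}-1=(t^2-1)\bigl(1+t^2+\cdots+t^{2k-2}\bigr)$ does reduce the claim to $(t^2-t+1)^{k-1}\leq t^{2k-2}\leq 1+t^2+\cdots+t^{2k-2}$, which holds for $t\geq 1$. Your route is contradiction-free and makes transparent where the $-3$ inside the root comes from; the paper's tuned $\varepsilon$, by contrast, exhibits $4|S|-3$ as the exact threshold at which its estimate becomes an equality, showing the method is used at its limit. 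You also handle the bipartite case differently: the paper halves the multiset $q^{-1/2}\,{\rm Spec}^*(X)$ (the positive entries plus half of the zeros, using that $T_k$ is even and that the number of zeros in the spectrum of a bipartite graph is even) and reapplies Proposition \ref{prop1} with $|S|=\frac{n-2}{2}$, whereas you pair $\lambda_0$ with $-\lambda_0$ inside the sum; your pairing sidesteps the parity-of-zeros observation entirely and yields the same constant $2n-7$.
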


\begin{thm}\label{thm:Li}
The following are equivalent:
\begin{enumerate}
\item $X$ is Ramanujan.

\item $h_k\geq 0$ for all $k\geq 1$.

\item $h_k\geq 0$ for infinitely many even $k\geq 2$. 
\end{enumerate}
\end{thm}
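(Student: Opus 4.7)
The plan is to derive a clean spectral expansion of $h_k$ over the nontrivial eigenvalues of $X$, which reduces all three statements to elementary assertions about where those eigenvalues lie.

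First I would use Ihara's determinantal formula
$$
Z(u)^{-1} = (1-u^2)^{\frac{n(q-1)}{2}}\det(I - Au + qu^2 I)
$$
and factor each quadratic as $1 - \lambda u + qu^2 = (1-\alpha_\lambda u)(1-\beta_\lambda u)$ with $\alpha_\lambda\beta_\lambda = q$ and $\alpha_\lambda+\beta_\lambda = \lambda$. The trivial eigenvalue $q+1$ contributes exactly $(1-u)(1-qu)$, and in the bipartite case $-(q+1)$ contributes $(1+u)(1+qu)$. Substituting into \eqref{Xi} and cancelling these against the explicit prefactors yields, uniformly in both cases,
$$
\Xi(u)^{-1} = \prod_{\lambda\text{ nontrivial}}\frac{(1-q^{\frac{1}{2}}u)^2}{(1-\alpha_\lambda u)(1-\beta_\lambda u)}.
$$

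Writing $\gamma_\lambda = q^{-\frac{1}{2}}\alpha_\lambda$, so that $\gamma_\lambda^{-1} = q^{-\frac{1}{2}}\beta_\lambda$, taking the logarithmic derivative of the above identity evaluated at $q^{-\frac{1}{2}}u$, and expanding each $-\ln(1-cu) = \sum_{k\ge 1} c^k u^k/k$ should produce, via Definition \ref{defn:hk},
$$
h_k = \sum_{\lambda\text{ nontrivial}}\bigl(2 - \gamma_\lambda^k - \gamma_\lambda^{-k}\bigr), \qquad k\ge 1.
$$

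With this formula the first two implications are essentially immediate. If $X$ is Ramanujan then $|\lambda|\le 2q^{\frac{1}{2}}$ forces $\gamma_\lambda = e^{i\theta_\lambda}$ on the unit circle, so each summand equals $2-2\cos(k\theta_\lambda) = 4\sin^2(k\theta_\lambda/2)\ge 0$, giving (ii); and (ii) $\Rightarrow$ (iii) is trivial. For the substantive direction (iii) $\Rightarrow$ (i), I would invoke Theorem \ref{thm:even}: the hypothesis supplies infinitely many even $k$ for which every nontrivial eigenvalue $\lambda$ satisfies $|\lambda|\le(1+\sqrt[k]{4n-7})q^{\frac{1}{2}}$ in the nonbipartite case and $|\lambda|\le(1+\sqrt[k]{2n-7})q^{\frac{1}{2}}$ in the bipartite case. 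Letting $k\to\infty$ along this subsequence, the $k$-th root tends to $1$, so $|\lambda|\le 2q^{\frac{1}{2}}$, which is the Ramanujan condition.

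The main substantive content is thus the derivation of the spectral expansion of $h_k$; once that is established, the positivity characterization reduces to the trigonometric identity $2-2\cos k\theta = 4\sin^2(k\theta/2)$ together with the eigenvalue bound already supplied by Theorem \ref{thm:even}. So the one computational step that really requires care is verifying that the explicit prefactors in \eqref{Xi} cancel precisely the trivial-eigenvalue factors of $Z(u)$, leaving only contributions from the nontrivial spectrum with the correct matching exponent $2n-2$ or $2n-4$ on the numerator.
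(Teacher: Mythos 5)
Your proposal is correct and follows essentially the same route as the paper: your spectral expansion $h_k=\sum_{\lambda}\bigl(2-\gamma_\lambda^k-\gamma_\lambda^{-k}\bigr)$ is precisely the paper's Proposition \ref{prop:hk} (with $\gamma_\lambda^k+\gamma_\lambda^{-k}=T_k(q^{-\frac{1}{2}}\lambda)$ via Lemma \ref{lem1:Tk}), derived from the same cancellation (\ref{Xi_v3}), and your three implications match the paper's: unit-circle positivity $2-2\cos k\theta\geq 0$ for (i) $\Rightarrow$ (ii) (the paper's Lemmas \ref{lem:hk>=0} and \ref{lem7:Tk}), triviality for (ii) $\Rightarrow$ (iii), and Theorem \ref{thm:even} with $k\to\infty$ along the given even subsequence for (iii) $\Rightarrow$ (i).
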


\begin{thm}\label{thm:Hasse-Weil}
\begin{enumerate}
\item If $X$ is nonbipartite, then $X$ is Ramanujan if and only if 
\begin{align*}
|N_k-q^k-1| 
&\leq
2(n-1) q^{\frac{k}{2}}
\qquad 
\hbox{for all odd $k$};
\\
|N_k-n(q-1)-q^k-1| 
&\leq 
2(n-1) q^{\frac{k}{2}}
\qquad 
\hbox{for all even $k$}.
\end{align*}

\item If $X$ is bipartite, then $X$ is Ramanujan if and only if 
\begin{align*}
|N_k-n(q-1)-2q^k-2| 
&\leq 
2(n-2) q^{\frac{k}{2}}
\qquad 
\hbox{for all even $k$}.
\end{align*}
\end{enumerate}
\end{thm}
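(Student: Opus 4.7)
The plan is to translate both directions of the theorem into identities and estimates on sums of $k$-th powers of the reciprocal roots of the local factors in the Ihara determinant formula, and for the converse to argue asymptotically in $k$. Starting from
\[
Z(u)^{-1}=(1-u^2)^{n(q-1)/2}\prod_\lambda(1-\lambda u+qu^2),
\]
where $\lambda$ ranges over the eigenvalues of the adjacency matrix $A$ with multiplicities, I factor each quadratic as $(1-\alpha_\lambda u)(1-\beta_\lambda u)$ with $\alpha_\lambda\beta_\lambda=q$ and $\alpha_\lambda+\beta_\lambda=\lambda$. Taking logarithmic derivatives of (\ref{zeta}) and matching coefficients yields
\[
N_k=\varepsilon_k\,n(q-1)+\sum_\lambda(\alpha_\lambda^k+\beta_\lambda^k),
\]
with $\varepsilon_k=1$ for even $k$ and $\varepsilon_k=0$ otherwise.

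Next I isolate the contributions of the trivial eigenvalues. The eigenvalue $q+1$ gives $\{\alpha_\lambda,\beta_\lambda\}=\{1,q\}$ and contributes $1+q^k$; in the bipartite case the additional trivial eigenvalue $-(q+1)$ contributes $(-1)^k(1+q^k)$ and the nontrivial eigenvalues come in $\pm\lambda$ pairs, so odd-$k$ contributions cancel and $N_k=0$ for odd $k$. After subtracting these explicit trivial contributions, both stated bounds become assertions about $\bigl|\sum_{\text{nontriv } \lambda}(\alpha_\lambda^k+\beta_\lambda^k)\bigr|$.

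For the forward implication, the Ramanujan bound $|\lambda|\le 2q^{1/2}$ on each nontrivial $\lambda$ forces $\alpha_\lambda,\beta_\lambda$ to be either complex conjugates on the circle of radius $q^{1/2}$ or a real coincident value $\pm q^{1/2}$; in either case $|\alpha_\lambda^k+\beta_\lambda^k|\le 2q^{k/2}$, and the triangle inequality over the $n-1$ (respectively $n-2$) nontrivial eigenvalues yields the claimed bounds.

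For the converse, suppose by contradiction that some nontrivial $\lambda_0$ satisfies $|\lambda_0|>2q^{1/2}$. Then $\alpha_{\lambda_0},\beta_{\lambda_0}$ are real with the same sign (their product is $q>0$) and $\max(|\alpha_{\lambda_0}|,|\beta_{\lambda_0}|)>q^{1/2}$. The crucial observation is that for even $k$ every term $\alpha_\lambda^k+\beta_\lambda^k$ arising from a real $\lambda$ is nonnegative, so non-Ramanujan contributions add constructively instead of cancelling. Combined with the bound $2q^{k/2}$ on each Ramanujan nontrivial contribution, the hypothesised inequality forces $|\alpha_{\lambda_0}|^k\le C(n)\,q^{k/2}$ along all even $k$; taking $k$-th roots and letting $k\to\infty$ through even values yields $|\alpha_{\lambda_0}|\le q^{1/2}$, contradicting $|\alpha_{\lambda_0}|>q^{1/2}$. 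The main obstacle is precisely this asymptotic non-cancellation step; once established in the nonbipartite case, the bipartite adaptation is essentially bookkeeping on the doubled trivial contribution and the paired nontrivial eigenvalues.
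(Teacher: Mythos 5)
Your proof is correct, but it takes a genuinely different route from the paper's. The paper deduces Theorem \ref{thm:Hasse-Weil} in one line from machinery already in place: Proposition \ref{prop:formula1} expresses $h_k$ as an affine function of $N_k$, so the upper half of each Hasse--Weil inequality is literally the statement $h_k\geq 0$ and the lower half is the statement $h_k\leq 4(n-1)$ (resp.\ $4(n-2)$); the forward direction then follows from Theorem \ref{thm:Li}(i)$\Rightarrow$(ii) together with Lemma \ref{lem:hk_upper}, and the converse from Theorem \ref{thm:Li}(iii)$\Rightarrow$(i), whose engine is the quantitative Chebyshev estimate of Proposition \ref{prop1}. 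Your identity $N_k=\varepsilon_k\,n(q-1)+\sum_\lambda(\alpha_\lambda^k+\beta_\lambda^k)$ is the paper's formula (\ref{e:Nk&Tk}) in disguise, since $q^{k/2}T_k(q^{-1/2}\lambda)=\alpha_\lambda^k+\beta_\lambda^k$, and your forward direction (each Ramanujan $\lambda$ contributes at most $2q^{k/2}$ in absolute value, then the triangle inequality over the $n-1$ or $n-2$ nontrivial eigenvalues) is the content of Lemmas \ref{lem5:Tk} and \ref{lem7:Tk}. Where you genuinely diverge is the converse: in place of Proposition \ref{prop1} you use non-cancellation of even powers of real roots (the analogue of Lemma \ref{lem6:Tk}) and pass to the limit, so a root with $|\alpha_{\lambda_0}|>q^{1/2}$ eventually overwhelms any bound of the form $C(n)\,q^{k/2}$. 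This is simpler and self-contained --- it bypasses the $\varepsilon$-manipulations in the proof of Proposition \ref{prop1} entirely --- but it is purely asymptotic: it consumes the hypothesis along infinitely many even $k$ and extracts nothing from finitely many values, whereas the paper's route through Theorem \ref{thm:even} yields an explicit eigenvalue bound $(1+\sqrt[k]{4n-7})\,q^{\frac{1}{2}}$ from a single even $k$ satisfying the inequality. Your bipartite bookkeeping (doubled trivial contribution $2q^k+2$, vanishing of $N_k$ for odd $k$ via the $\pm\lambda$ pairing, count $n-2$) is also correct and matches the paper's.
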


\noindent Theorem \ref{thm:even} is an improvement of the implication from Theorem \ref{thm:Li}(iii) to Theorem \ref{thm:Li}(i). The equivalence of Theorem \ref{thm:Li}(i), (ii) is an analogue of Li's criterion. In \cite[\S 1.3]{Iharazeta2018} it was shown that if $X$ is nonbipartite Ramanujan then 
$
N_k=
q^k+O(n q^{\frac{k}{2}})$; if $X$ is bipartite Ramanujan then 
$
N_k=
2q^k +O(n q^{\frac{k}{2}})
$ for even $k$. Theorem \ref{thm:Hasse-Weil} strengthens the above necessary conditions for $X$ as Ramanujan. Furthermore, Theorem \ref{thm:Hasse-Weil} is an analogue of the Hasse--Weil bound \cite{Weil1949}.

The paper is organized as follows: 
In \S\ref{s:Ihara} we give some preliminaries on $Z(u)$ and $\Xi(u)$. 
In \S\ref{s:hk} we derive three formulae for $\{h_k\}_{k=1}^\infty$. In \S\ref{s:proof} we prove Theorems \ref{thm:even}--\ref{thm:Hasse-Weil}. In \S\ref{s:behavior} we discuss the behavior of $\{h_{2k}\}_{k=1}^\infty$ when $X$ is not Ramanujan.

\section{The Ihara zeta and xi functions}\label{s:Ihara}

Let ${\rm Spec}(X)$ denote the {\it spectrum} of $X$; that is the multiset of all eigenvalues of $X$ with geometric multiplicities. Since $X$ is a connected $(q+1)$-regular undirected graph, the value $q+1\in {\rm Spec}(X)$ with multiplicity one.
Ihara's theorem \cite{Ihara:1966} states that $Z(u)$ is a rational function of the form 
\begin{gather}\label{zeta_v2}
Z(u)^{-1}=
(1-u^2)^{\frac{n(q-1)}{2}}\prod_{\lambda\in {\rm Spec}(X)}
1-\lambda u+qu^2.
\end{gather}
Substituting (\ref{zeta_v2}) into (\ref{Xi}) yields that 
\begin{gather}\label{Xi_v2}
\Xi^{-1}(u)=
\left\{
\begin{array}{ll}
\displaystyle
\frac{(1-u)(1-qu)(1-q^{\frac{1}{2}} u)^{2n-2}}{\prod\limits_{\lambda\in {\rm Spec}(X)}1-\lambda u+q u^2}
\qquad 
&\hbox{if $X$ is nonbipartite},
\\
\displaystyle
\frac{(1-u^2)(1-q^2u^2)(1-q^{\frac{1}{2}} u)^{2n-4}}{\prod\limits_{\lambda\in {\rm Spec}(X)}1-\lambda u+q u^2}
\qquad 
&\hbox{if $X$ is bipartite}.
\end{array}
\right.
\end{gather}
Let ${\rm Spec}^*(X)$ denote the multiset of all nontrivial eigenvalues of $X$ with geometric multiplicities. 
Recall that ${\rm Spec}(X)$ is symmetric with respect to $0$ when $X$ is bipartite. Hence
\begin{gather}\label{S*(X)}
{\rm Spec}^*(X)=
\left\{
\begin{array}{ll}
{\rm Spec}(X)\setminus\{q+1\}
\qquad 
&\hbox{if $X$ is nonbipartite},
\\
{\rm Spec}(X)\setminus\{\pm (q+1)\}
\qquad 
&\hbox{if $X$ is bipartite}.
\end{array}
\right.
\end{gather}  
Combined with (\ref{Xi_v2}) we obtain that
\begin{gather}\label{Xi_v3}
\Xi(u)=
\prod_{\lambda\in {\rm Spec}^*(X)}
\frac{1-\lambda u+q u^2}{(1-q^{\frac{1}{2}} u)^2}.
\end{gather}

\begin{prop}
$\Xi(u)$ satisfies the functional equation 
$\Xi(q^{-1} u^{-1})=\Xi(u)$.
\end{prop}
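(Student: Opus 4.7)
The plan is to use the product formula (\ref{Xi_v3}) and verify the functional equation factor by factor. Concretely, $\Xi(u)$ is a product over $\lambda\in \mathrm{Spec}^*(X)$ of the rational functions
$$
f_\lambda(u)=\frac{1-\lambda u+qu^2}{(1-q^{\frac{1}{2}}u)^2},
$$
so it suffices to show $f_\lambda(q^{-1}u^{-1})=f_\lambda(u)$ for each $\lambda$.

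The computation I would carry out goes as follows. Substituting $u\mapsto q^{-1}u^{-1}$ into the numerator gives $1-\lambda q^{-1}u^{-1}+q^{-1}u^{-2}$, which after clearing $qu^2$ equals $(1-\lambda u+qu^2)/(qu^2)$. Similarly, using the identity $1-q^{-\frac{1}{2}}u^{-1}=-(1-q^{\frac{1}{2}}u)/(q^{\frac{1}{2}}u)$, the denominator becomes $(1-q^{\frac{1}{2}}u)^2/(qu^2)$. The factors of $qu^2$ cancel, leaving exactly $f_\lambda(u)$. Taking the product over $\lambda\in\mathrm{Spec}^*(X)$ then yields $\Xi(q^{-1}u^{-1})=\Xi(u)$.

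There is no real obstacle here: the only thing to be careful about is the book-keeping of the $(1-q^{\frac{1}{2}}u)^2$ factor under inversion, and the observation that the $q^{\frac{1}{2}}$ factors on numerator and denominator match up so that the quadratic $1-\lambda u+qu^2$ is palindromic with respect to the reciprocal root pairing $u\leftrightarrow q^{-1}u^{-1}$ (its roots are $\lambda\pm\sqrt{\lambda^2-4q}$ over $2q$, whose product is $q^{-1}$). This is precisely the reason that after pulling out the trivial factors as in (\ref{Xi}), the remaining object has the desired symmetry.
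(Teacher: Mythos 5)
Your proof is correct and is exactly the verification the paper has in mind: its proof simply states that the functional equation is routine to check from the product formula (\ref{Xi_v3}), and your factor-by-factor computation (including the sign from $1-q^{-\frac{1}{2}}u^{-1}=-(1-q^{\frac{1}{2}}u)/(q^{\frac{1}{2}}u)$ squaring away) fills in precisely those routine details.
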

\begin{proof}
It is routine to verify the proposition by using (\ref{Xi_v3}). 
\end{proof}

\section{Formulae for $h_k$}\label{s:hk}

Recall the sequence $\{h_k\}_{k=1}^\infty$ from Definition \ref{defn:hk}. 
In this section we give two combinatorial formulae for $\{h_k\}_{k=1}^\infty$ and a formula for $\{h_k\}_{k=1}^\infty$ in terms of the Chebyshev polynomials. 

\begin{prop}\label{prop:formula1}
\begin{enumerate}
\item If $X$ is nonbipartite then 
\begin{gather*}
h_k
=
\left\{
\begin{array}{ll}
2(n-1)
+
q^{\frac{k}{2}}
+q^{-\frac{k}{2}}
-q^{-\frac{k}{2}} N_k 
\qquad 
&\hbox{for all odd $k$},
\\
2(n-1)
+
q^{\frac{k}{2}}
+q^{-\frac{k}{2}}
-
q^{-\frac{k}{2}}
\left(
N_k-n(q-1)
\right)
\qquad 
&\hbox{for all even $k$}.
\end{array}
\right.
\end{gather*}

\item If $X$ is bipartite then 
\begin{gather*}
h_k
=
\left\{
\begin{array}{ll}
2(n-2)
\qquad 
&\hbox{for all odd $k$},
\\
2(n-2+q^{\frac{k}{2}}
+ q^{-\frac{k}{2}})
-
q^{-\frac{k}{2}}
\left(
N_k-n(q-1)
\right) 
\qquad 
&\hbox{for all even $k$}.
\end{array}
\right.
\end{gather*}
\end{enumerate}
\end{prop}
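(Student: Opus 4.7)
The plan is to compute $\ln \Xi(q^{-1/2}u)$ as an explicit power series in $u$ and then read off its derivative.

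First I would take the logarithm of both sides of (\ref{Xi}). In the nonbipartite case this gives
\[
\ln \Xi(u) = -\ln(1-u) - \ln(1-qu) - (2n-2)\ln(1-q^{1/2}u) - \tfrac{n(q-1)}{2}\ln(1-u^2) - \ln Z(u),
\]
and a similar expression in the bipartite case. Using the standard expansion $-\ln(1-au) = \sum_{k\geq 1} a^k u^k/k$ together with the defining series (\ref{zeta}) for $Z(u)$, each term becomes an explicit power series. The $(1-u^2)$ factor only contributes at even $k$, with doubled weight after re-indexing $2k \mapsto k$.

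Then I would substitute $u \mapsto q^{-1/2}u$, which multiplies the coefficient of $u^k$ by $q^{-k/2}$, and differentiate term by term. Matching against the definition in Definition~\ref{defn:hk} gives, in the nonbipartite case,
\[
h_k = q^{-k/2} + q^{k/2} + 2(n-1) + n(q-1)\, q^{-k/2}\,\mathbf{1}_{k\;\text{even}} - q^{-k/2} N_k,
\]
which is exactly the two stated formulas for odd and even $k$. In the bipartite case, I would carry out the same computation starting from the bipartite line of (\ref{Xi}). This produces, after simplification, $h_k = 2(n-2) - q^{-k/2} N_k$ for odd $k$ and the claimed expression for even $k$. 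Finally, I invoke the fact that a bipartite graph has no cycles of odd length, so $N_k = 0$ for odd $k$, which collapses the odd-$k$ formula to $h_k = 2(n-2)$.

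The proof is essentially mechanical, so there is no serious obstacle; the only thing to watch is bookkeeping. The steps requiring care are: (i) keeping the sign right when passing from $\Xi(u)^{-1}$ to $\ln\Xi(u)$; (ii) correctly accounting for the factor of $2$ when re-indexing the even-power series coming from $\ln(1-u^2)$ and (in the bipartite case) $\ln(1-q^2u^2)$; and (iii) handling the odd/even split cleanly in both cases. After these, reading off $h_k$ from the derivative is immediate.
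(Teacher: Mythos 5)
Your proposal is correct and follows essentially the same route as the paper's own proof, which likewise takes the logarithm of (\ref{Xi}), uses $\ln Z(u)=\sum_{k\geq 1}N_k u^k/k$ from (\ref{zeta}), substitutes $u\mapsto q^{-\frac{1}{2}}u$, and differentiates to read off $h_k$. Your explicit bookkeeping — the even-power factors $\ln(1-u^2)$ and $\ln(1-q^2u^2)$, and the observation that $N_k=0$ for odd $k$ when $X$ is bipartite, which collapses the odd-$k$ formula to $2(n-2)$ — is exactly what the paper's terse ``evaluate directly'' leaves implicit.
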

\begin{proof}
Taking logarithm on (\ref{zeta}) yields that 
\begin{gather}\label{zeta_v1}
\ln Z(u)
=
\sum_{k=1}^\infty \frac{N_k}{k} u^k.
\end{gather}
Evaluate $h_k$ by using (\ref{Xi}) and (\ref{zeta_v1}) directly.
\end{proof}

 Let $\{T_k(x)\}_{k=0}^\infty$ denote the polynomials defined by 
\begin{align*}
xT_k(x) = T_{k+1}(x) + T_{k-1}(x)
\qquad \hbox{for all $k\geq 1$}
\end{align*}
with $T_0(x)=2$ and $T_1(x)=x$ \cite[\S 2.3]{centerAW:2016}. Note that $\frac{1}{2}T_k(2x)$ is  the $k$th Chebyshev polynomial of the first kind for all $k\geq 0$ \cite{chebyshev}.

\begin{lem}
[\!\!\cite{centerAW:2016, chebyshev}]
\label{lem1:Tk}
$T_k(x+x^{-1})=x^k+x^{-k}$ for all $k\geq 0$.
\end{lem}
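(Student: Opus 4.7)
The plan is to prove the identity $T_k(x+x^{-1}) = x^k + x^{-k}$ by strong induction on $k$, leveraging the three-term recurrence $xT_k(x) = T_{k+1}(x) + T_{k-1}(x)$ that defines the sequence.

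First I would dispatch the two base cases directly from the initial data of the sequence. For $k=0$, we have $T_0(x+x^{-1}) = 2 = x^0 + x^{-0}$, and for $k=1$, we have $T_1(x+x^{-1}) = x+x^{-1}$, which matches $x^1 + x^{-1}$. These anchor the induction.

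For the inductive step, suppose the identity holds for $k-1$ and $k$ with $k \geq 1$. Setting $y = x + x^{-1}$ in the defining recurrence gives $T_{k+1}(y) = y\, T_k(y) - T_{k-1}(y)$. Substituting the inductive hypotheses $T_k(y) = x^k + x^{-k}$ and $T_{k-1}(y) = x^{k-1} + x^{-(k-1)}$, and expanding $(x+x^{-1})(x^k + x^{-k})$, the cross terms $x^{k-1}$ and $x^{-(k-1)}$ cancel against $T_{k-1}(y)$, leaving exactly $x^{k+1} + x^{-(k+1)}$. This completes the induction.

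The proof is essentially an algebraic verification, and there is no serious obstacle; the only thing to be careful about is ensuring the indexing of the recurrence matches the shift used in the induction (the recurrence is stated for $k \geq 1$, which is precisely what is needed to conclude the step from $k$ to $k+1$).
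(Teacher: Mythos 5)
Your proof is correct and complete: the base cases match the initial data $T_0(x)=2$, $T_1(x)=x$, and the inductive step via $T_{k+1}(y)=yT_k(y)-T_{k-1}(y)$ with $y=x+x^{-1}$ goes through exactly as you describe. The paper itself gives no proof of this lemma (it is cited from the literature), and the standard argument behind the citation is precisely your induction on the three-term recurrence, so there is nothing to add.
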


Given a multiset $S$ of numbers and a constant $c$, we let $c S$ denote the multiset consisting of $cs$ for all $s\in S$.

\begin{prop}\label{prop:hk}
For all $k\geq 1$ the following equation holds:
$$
h_k=
2 |{\rm Spec}^*(X)|
-\sum_{s\in q^{-1/2} {\rm Spec}^*(X)}
T_k(s).
$$
\end{prop}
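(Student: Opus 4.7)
The plan is to read off $h_k$ directly from the product formula (\ref{Xi_v3}) after the substitution $u\mapsto q^{-1/2}u$. The substitution is chosen so that the two quadratic factors in numerator and denominator become monic in $u^2$, producing the expected symmetry under $u\mapsto u^{-1}$ and making the coefficients of the logarithmic derivative fall out as Chebyshev values.

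First I would substitute $u\mapsto q^{-1/2}u$ into (\ref{Xi_v3}) to obtain
$$
\Xi(q^{-1/2}u)=\prod_{\lambda\in{\rm Spec}^*(X)}\frac{1-\lambda q^{-1/2}u+u^2}{(1-u)^2}.
$$
Taking the logarithm turns the product into a sum, and differentiating term-by-term in $u$ gives
$$
\frac{d}{du}\ln\Xi(q^{-1/2}u)=\sum_{\lambda\in{\rm Spec}^*(X)}\left(\frac{d}{du}\ln(1-\lambda q^{-1/2}u+u^2)+\frac{2}{1-u}\right).
$$
The second summand expands as $2\sum_{k\ge 0}u^k$, contributing $2|{\rm Spec}^*(X)|$ to each coefficient.

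For the quadratic factor, set $s=\lambda q^{-1/2}$ and factor $1-su+u^2=(1-\alpha u)(1-\alpha^{-1}u)$ where $\alpha+\alpha^{-1}=s$ (possibly over $\mathbb C$; the symmetry in $\alpha,\alpha^{-1}$ makes the final expressions real). Then
$$
\frac{d}{du}\ln(1-su+u^2)=-\frac{\alpha}{1-\alpha u}-\frac{\alpha^{-1}}{1-\alpha^{-1}u}=-\sum_{k=0}^\infty(\alpha^{k+1}+\alpha^{-(k+1)})u^k,
$$
and by Lemma \ref{lem1:Tk} we have $\alpha^{k+1}+\alpha^{-(k+1)}=T_{k+1}(s)$.

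Finally I would compare the coefficient of $u^{k-1}$ on both sides with Definition \ref{defn:hk}, giving
$$
h_k=2|{\rm Spec}^*(X)|-\sum_{\lambda\in{\rm Spec}^*(X)}T_k(\lambda q^{-1/2})=2|{\rm Spec}^*(X)|-\sum_{s\in q^{-1/2}{\rm Spec}^*(X)}T_k(s),
$$
as desired. There is no real obstacle here; the only subtlety is to handle the factorization $1-su+u^2=(1-\alpha u)(1-\alpha^{-1}u)$ uniformly regardless of whether the roots $\alpha,\alpha^{-1}$ are real or a complex-conjugate pair on the unit circle, which is immediate since the identity $T_k(\alpha+\alpha^{-1})=\alpha^k+\alpha^{-k}$ from Lemma \ref{lem1:Tk} is a polynomial identity.
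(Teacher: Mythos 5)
Your proposal is correct and follows essentially the same route as the paper: both start from (\ref{Xi_v3}) with the substitution $u\mapsto q^{-\frac{1}{2}}u$, split off the $(1-u)^{-2}$ factor as a geometric series, factor each quadratic as $(1-\alpha u)(1-\alpha^{-1}u)$ with $s=\alpha+\alpha^{-1}$, and invoke Lemma \ref{lem1:Tk} to identify $\alpha^k+\alpha^{-k}=T_k(s)$. The only difference is cosmetic---you differentiate the logarithm before expanding in powers of $u$, while the paper expands $\ln\Xi(q^{-\frac{1}{2}}u)$ first and differentiates at the end---and your computations check out.
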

\begin{proof}
Let $S=q^{-\frac{1}{2}} {\rm Spec}^*(X)$. 
Applying (\ref{Xi_v3}) yields that 
\begin{align*}
\ln \Xi(q^{-\frac{1}{2}}u)
&=\sum_{s\in S}\ln (1-su+u^2)-2|S|\ln(1-u)
\\
&=\sum_{s\in S}\ln (1-su+u^2)+2|S|\sum_{k=1}^\infty \frac{u^k}{k}.
\end{align*}
Let $s\in S$ be given. Write $s=\alpha+\alpha^{-1}$ for some nonzero complex number $\alpha$. Then 
\begin{align*}
\ln (1-su+u^2)
&=
\ln(1-\alpha u)+\ln(1-\alpha^{-1} u)=
-\sum_{k=1}^\infty \frac{\alpha^k+\alpha^{-k}}{k} u^k.
\end{align*}
It follows from Lemma \ref{lem1:Tk} that $T_k(s)=\alpha^k+\alpha^{-k}$. By the above comments we have 
$$
\ln \Xi(q^{-\frac{1}{2}}u)=\sum_{k=1}^\infty \frac{2|S|-\sum_{s\in S} T_k(s)}{k} u^k.
$$
Now the proposition follows by taking differential on both sides of the above equation.
\end{proof}

For convenience we define ${-1\choose -1}=1$ and ${k\choose -1}=0$ for all $k\geq 0$.

\begin{lem}
[\!\!\cite{centerAW:2016,chebyshev}]
\label{lem3:Tk}
$
T_k(x) = 
\displaystyle
\sum_{i=0}^{\lfloor \frac{k}{2} \rfloor} 
(-1)^i
\left(
{k-i \choose i}+{k-i-1\choose i-1}
\right) x^{k-2 i}
$
for all $k\geq 0$. 
\end{lem}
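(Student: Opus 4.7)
The plan is to establish this identity by strong induction on $k$, using the three-term recurrence $T_{k+1}(x)=xT_k(x)-T_{k-1}(x)$ together with the initial values $T_0(x)=2$ and $T_1(x)=x$, and finishing with a couple of applications of Pascal's rule.

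First I would verify the two base cases directly. For $k=0$ the right-hand side reduces to $(-1)^0\bigl[\binom{0}{0}+\binom{-1}{-1}\bigr]x^0 = 1+1=2$, using the convention $\binom{-1}{-1}=1$ stated just before the lemma; this equals $T_0(x)$. For $k=1$ the only index is $i=0$, giving $\bigl[\binom{1}{0}+\binom{0}{-1}\bigr]x = x = T_1(x)$, using $\binom{1}{-1}=0$ (as $\binom{k}{-1}=0$ for $k\ge 0$).

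For the inductive step, assume the formula for $T_{k-1}$ and $T_k$ and abbreviate
\[
c_{k,i} = (-1)^i\left(\binom{k-i}{i}+\binom{k-i-1}{i-1}\right),
\]
so that $T_k(x)=\sum_i c_{k,i}\, x^{k-2i}$. Reading off the coefficient of $x^{k+1-2j}$ in $xT_k(x)-T_{k-1}(x)$ (where $xT_k$ contributes $c_{k,j}$ and $T_{k-1}$ contributes $c_{k-1,j-1}$ after shifting the summation index), the task reduces to checking the pointwise identity
\[
c_{k,j} - c_{k-1,j-1} \;=\; c_{k+1,j}.
\]
Expanding the left-hand side pulls out a common sign $(-1)^j$ and produces four binomials, which group as $\bigl[\binom{k-j}{j}+\binom{k-j}{j-1}\bigr]+\bigl[\binom{k-j-1}{j-1}+\binom{k-j-1}{j-2}\bigr]$; two applications of Pascal's rule collapse this into $\binom{k+1-j}{j}+\binom{k-j}{j-1}$, which is exactly $(-1)^j$ times the bracket in $c_{k+1,j}$. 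One should also check the boundary indices $j=0$ and $j=\lfloor (k+1)/2\rfloor$ separately (so as to make sure the ranges of $i$ in $T_{k-1}$ and $T_k$ match up with the range for $T_{k+1}$), but the conventions $\binom{-1}{-1}=1$ and $\binom{k}{-1}=0$ were set up precisely so that these edge cases go through without a separate argument.

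The only nontrivial ingredient is the binomial manipulation in the last paragraph, and that is completely mechanical; I do not expect any real obstacle. In fact, since $\tfrac12 T_k(2x)$ is the classical Chebyshev polynomial $T_k^{\rm Cheb}(x)$ of the first kind, the stated identity is equivalent to the well-known expansion of $T_k^{\rm Cheb}$, and one could alternatively cite it directly from \cite{chebyshev, centerAW:2016} without giving the induction.
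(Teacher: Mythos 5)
Your proof is correct, and it is worth noting that the paper itself gives no proof of this lemma at all --- it is stated with citations to \cite{centerAW:2016,chebyshev} and used as a known expansion of (twice) the Chebyshev polynomial of the first kind. So your closing remark in fact describes exactly what the paper does, while your induction supplies the self-contained verification the paper omits. The argument checks out: the base cases use the paper's conventions $\binom{-1}{-1}=1$ and $\binom{k}{-1}=0$ correctly, and the coefficient identity $c_{k+1,j}=c_{k,j}-c_{k-1,j-1}$ reduces, after extracting the sign $(-1)^j$, to $\bigl[\binom{k-j}{j}+\binom{k-j}{j-1}\bigr]+\bigl[\binom{k-j-1}{j-1}+\binom{k-j-1}{j-2}\bigr]=\binom{k+1-j}{j}+\binom{k-j}{j-1}$, which is two instances of Pascal's rule, as you say. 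Two trivial points of hygiene: in the $k=1$ base case the binomial that vanishes is $\binom{0}{-1}$, not $\binom{1}{-1}$ (both are $0$ under the convention, so nothing breaks); and at the boundary $j=0$ the symbol $\binom{k-1}{-2}$ appearing in the formal grouping is not covered by the stated conventions, so there the cleanest reading is that the term $c_{k-1,-1}$ simply does not occur in $T_{k-1}$ and the coefficient of $x^{k+1}$ is checked directly --- which you flagged. The top boundary $j=\lfloor(k+1)/2\rfloor$ likewise goes through (for $k+1$ even the constant term of $T_{k+1}$ is $-c_{k-1,\frac{k-1}{2}}=(-1)^{\frac{k+1}{2}}\cdot 2$, matching $c_{k+1,\frac{k+1}{2}}$). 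In short: a complete elementary proof of a statement the paper only cites.
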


For all $k\geq 1$ let $C_k$ denote the number of the cycles on $X$ of length $k$ and define $C_0=n$.

\begin{lem}\label{lem8:Tk}
$\sum\limits_{s\in q^{-1/2} {\rm Spec}(X)} T_k(s)
=
q^{-\frac{k}{2}}
\displaystyle
\sum_{i=0}^{\lfloor \frac{k}{2} \rfloor} 
(-q)^i
\left(
{k-i \choose i}+{k-i-1\choose i-1}
\right) C_{k-2 i}
$
for all $k\geq 0$.
\end{lem}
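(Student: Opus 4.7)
The plan is to reduce the claim to a direct combination of the explicit polynomial formula for $T_k(x)$ in Lemma \ref{lem3:Tk} and the standard interpretation of power sums of eigenvalues as closed walk counts on $X$.

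First I would write each element of $q^{-1/2}{\rm Spec}(X)$ as $s = q^{-1/2}\lambda$ with $\lambda \in {\rm Spec}(X)$ and apply Lemma \ref{lem3:Tk} to expand
\begin{align*}
T_k(s) = \sum_{i=0}^{\lfloor k/2 \rfloor} (-1)^i \left({k-i \choose i}+{k-i-1\choose i-1}\right) q^{-(k-2i)/2}\,\lambda^{k-2i}.
\end{align*}
Summing over $\lambda \in {\rm Spec}(X)$ and pulling out the common factor $q^{-k/2}$ converts each $(-1)^i q^i$ into $(-q)^i$, yielding
\begin{align*}
\sum_{s\in q^{-1/2}{\rm Spec}(X)} T_k(s)
= q^{-k/2}\sum_{i=0}^{\lfloor k/2 \rfloor} (-q)^i \left({k-i \choose i}+{k-i-1\choose i-1}\right) \sum_{\lambda \in {\rm Spec}(X)} \lambda^{k-2i}.
\end{align*}

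The remaining step is to identify $\sum_{\lambda \in {\rm Spec}(X)} \lambda^{j}$ with $C_j$ for all $0 \le j \le k$. Since ${\rm Spec}(X)$ is the multiset of eigenvalues of the adjacency matrix $A$ counted with geometric multiplicity and $A$ is diagonalizable (symmetric), one has $\sum_{\lambda \in {\rm Spec}(X)} \lambda^{j} = \mathrm{tr}(A^j)$. For $j \ge 1$, the $(x,x)$-entry of $A^j$ counts walks of length $j$ from $x$ to $x$, so summing over vertices $x$ gives the total number of closed walks of length $j$, which is $C_j$ in the notation of the paper (a \emph{cycle} here is a walk from a vertex to itself). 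For $j = 0$, $\mathrm{tr}(I) = n$, which matches the convention $C_0 = n$. Substituting this back produces exactly the claimed identity.

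The derivation is essentially a bookkeeping exercise; the only point requiring minor care is verifying that the $i = k/2$ term (when $k$ is even) is handled uniformly, which is precisely what the convention $C_0 = n$ together with the convention ${-1\choose -1}=1$, ${k\choose -1}=0$ is designed to guarantee. Hence no separate case analysis for even versus odd $k$ should be necessary, and the lemma follows.
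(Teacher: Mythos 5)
Your proposal is correct and follows essentially the same route as the paper: the paper's proof is exactly the one-line instruction to evaluate the left-hand side via Lemma \ref{lem3:Tk} together with the identity $C_j=\sum_{\lambda\in{\rm Spec}(X)}\lambda^{j}$, which you have simply carried out in full detail (including the justification $\sum_\lambda \lambda^j=\mathrm{tr}(A^j)$ and the check that $C_0=n$ handles the $i=k/2$ term). No gaps; your expanded write-up matches the intended argument.
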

\begin{proof}
Evaluate the left-hand side by using Lemma \ref{lem3:Tk} along with the fact $C_k=
\sum_{\lambda \in {\rm Spec}(X)}\lambda^k
$ for all $k\geq 0$.
\end{proof}

Combining Propositions \ref{prop:formula1} and \ref{prop:hk} yields that 
\begin{gather}\label{e:Nk&Tk}
N_k=
\left\{
\begin{array}{ll}
q^\frac{k}{2} \sum_{s\in q^{-1/2} {\rm Spec}(X)} T_k(s) \qquad &\hbox{for all odd $k$},
\\
n(q-1)
+
q^\frac{k}{2} \sum_{s\in q^{-1/2} {\rm Spec}(X)} T_k(s)
\qquad &\hbox{for all even $k$}.
\end{array}
\right.
\end{gather}
As far as we know, the formula (\ref{e:Nk&Tk}) was first given in \cite[Lemma 4]{Iharazeta2018} and it was given a proof of \cite[Lemma 4]{Iharazeta2018} without using (\ref{zeta_v2}).
By Lemma \ref{lem8:Tk} and (\ref{e:Nk&Tk}) we have the following lemma:

\begin{lem}\label{lem:Nk}
The following equation holds:
\begin{gather*}
N_k
=
\left\{
\begin{array}{ll}
\displaystyle
\sum_{i=0}^{\frac{k-1}{2}} 
(-q)^i
\left(
{k-i \choose i}+{k-i-1\choose i-1}
\right)
C_{k-2i}
\qquad 
&\hbox{for all odd $k$},
\\
\displaystyle
n(q-1)+
\sum_{i=0}^{\frac{k}{2}} 
(-q)^i
\left(
{k-i \choose i}+{k-i-1\choose i-1}
\right)
C_{k-2i}
\qquad 
&\hbox{for all even $k$}.
\end{array}
\right.
\end{gather*}
\end{lem}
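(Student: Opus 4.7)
The plan is to derive Lemma \ref{lem:Nk} by directly combining the two results that immediately precede it, namely formula (\ref{e:Nk&Tk}) and Lemma \ref{lem8:Tk}. Equation (\ref{e:Nk&Tk}) expresses $N_k$, up to the additive term $n(q-1)$ in the even case, as $q^{k/2}$ times the Chebyshev power sum $\sum_{s\in q^{-1/2}{\rm Spec}(X)} T_k(s)$. Meanwhile, Lemma \ref{lem8:Tk} gives an explicit evaluation of this same power sum as $q^{-k/2}$ times a binomial combination of the cycle counts $C_{k-2i}$. Substituting one into the other causes the factors $q^{k/2}$ and $q^{-k/2}$ to cancel, leaving precisely the claimed expression.

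The only bookkeeping to address is the upper limit of the sum. The summation in Lemma \ref{lem8:Tk} runs up to $\lfloor k/2 \rfloor$, which equals $(k-1)/2$ when $k$ is odd and $k/2$ when $k$ is even; matching these to the two branches of (\ref{e:Nk&Tk}) yields the two cases stated in the lemma. The $n(q-1)$ correction appears exactly in the even branch, inherited directly from (\ref{e:Nk&Tk}), reflecting the fact that every non-identity vertex contributes to backtracking cycles of each even length.

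There is no real obstacle here, since the substantive content has been absorbed into the earlier results: the identity $C_k = \sum_{\lambda\in{\rm Spec}(X)}\lambda^k$ plus the explicit coefficients of $T_k(x)$ from Lemma \ref{lem3:Tk} (underlying Lemma \ref{lem8:Tk}), together with Propositions \ref{prop:formula1} and \ref{prop:hk} (underlying (\ref{e:Nk&Tk})). Hence the lemma reduces to a one-line substitution, and the proof is essentially the remark that the two expressions fit together and the powers of $q^{1/2}$ cancel.
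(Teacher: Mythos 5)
Your proposal is correct and is exactly the paper's own proof: the paper derives Lemma \ref{lem:Nk} precisely by combining (\ref{e:Nk&Tk}) with Lemma \ref{lem8:Tk}, where the factors $q^{\frac{k}{2}}$ and $q^{-\frac{k}{2}}$ cancel and $\lfloor k/2\rfloor$ splits into the two stated branches. Your bookkeeping of the summation limits and of the $n(q-1)$ term in the even case matches the intended argument, so nothing is missing.
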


\begin{prop}\label{prop:formula2}
\begin{enumerate}
\item If $X$ is nonbipartite then 
\begin{gather*}
h_k
=
2(n-1)
+q^{\frac{k}{2}}+q^{-\frac{k}{2}}
-
q^{-\frac{k}{2}}
\sum_{i=0}^{\lfloor \frac{k}{2} \rfloor} 
(-q)^i
\left(
{k-i \choose i}+{k-i-1\choose i-1}
\right)
C_{k-2i}
\qquad 
\hbox{for all $k\geq 1$}. 
\end{gather*}

\item If $X$ is bipartite then 
\begin{gather*}
h_k
=
\left\{
\begin{array}{ll}
2(n-2)
\qquad 
&\hbox{for all odd $k$},
\\
\displaystyle
2(n-2+q^{\frac{k}{2}}+q^{-\frac{k}{2}})
-
q^{-\frac{k}{2}}
\sum_{i=0}^{\frac{k}{2}} 
(-q)^i
\left(
{k-i \choose i}+{k-i-1\choose i-1}
\right)
C_{k-2i}
\qquad 
&\hbox{for all even $k$}.
\end{array}
\right.
\end{gather*}
\end{enumerate}
\end{prop}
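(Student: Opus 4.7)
The plan is to obtain Proposition \ref{prop:formula2} as an immediate combination of Proposition \ref{prop:formula1}, which expresses $h_k$ in terms of $N_k$, with Lemma \ref{lem:Nk}, which expresses $N_k$ in terms of the cycle counts $\{C_{k-2i}\}_{i\geq 0}$. Both previously established formulas already split the analysis into the nonbipartite/bipartite and odd/even cases, so the argument reduces to a direct substitution.

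For part (i), I start from the nonbipartite formulas in Proposition \ref{prop:formula1}(i). For odd $k$, Lemma \ref{lem:Nk} writes $N_k$ as a sum from $i=0$ to $(k-1)/2$ with no $n(q-1)$ term; since $(k-1)/2=\lfloor k/2\rfloor$, substituting into $h_k=2(n-1)+q^{k/2}+q^{-k/2}-q^{-k/2}N_k$ yields precisely the claimed formula. For even $k$, Lemma \ref{lem:Nk} writes $N_k-n(q-1)$ as a sum from $i=0$ to $k/2=\lfloor k/2\rfloor$, and substituting into $h_k=2(n-1)+q^{k/2}+q^{-k/2}-q^{-k/2}(N_k-n(q-1))$ produces the same expression. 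The convenient cancellation of the $n(q-1)$ contribution in the even case is what permits the two parities to be merged into a single formula valid for all $k\geq 1$. For part (ii), Proposition \ref{prop:formula1}(ii) already gives $h_k=2(n-2)$ for odd $k$, and for even $k$ one substitutes $N_k-n(q-1)$ from Lemma \ref{lem:Nk} into $h_k=2(n-2+q^{k/2}+q^{-k/2})-q^{-k/2}(N_k-n(q-1))$ to finish.

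There is no genuine obstacle: the proposition is essentially bookkeeping on top of two identities that have already been proved in the excerpt. The only point requiring a moment of attention is the matching of the upper summation indices $(k-1)/2$ and $k/2$ with $\lfloor k/2\rfloor$ in the nonbipartite case, which is what licenses the uniform statement for all $k\geq 1$ in part (i).
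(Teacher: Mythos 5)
Your proposal is correct and matches the paper's proof exactly: the paper proves Proposition \ref{prop:formula2} precisely by combining Proposition \ref{prop:formula1} with Lemma \ref{lem:Nk}. Your extra remarks on matching the summation bounds $(k-1)/2$ and $k/2$ with $\lfloor k/2\rfloor$, and on the cancellation of the $n(q-1)$ term that merges the two parities in the nonbipartite case, just spell out the bookkeeping the paper leaves implicit.
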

\begin{proof}
Combine Proposition \ref{prop:formula1} and Lemma  \ref{lem:Nk}.
\end{proof}

\section{Proof of the main results}\label{s:proof}

In this section we show Theorems \ref{thm:even}--\ref{thm:Hasse-Weil}.

\begin{lem}\label{lem:hk>=0}
For all $k\geq 1$ the coefficient $h_k\geq 0$ if and only if 
$$
\frac{1}{2|{\rm Spec}^*(X)|}\sum_{s\in q^{-1/2}{\rm Spec}^*(X)} T_k(s)\leq 1.
$$
\end{lem}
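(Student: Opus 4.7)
The plan is to read the lemma as a trivial rearrangement of Proposition \ref{prop:hk}. That proposition states
$$
h_k = 2|\operatorname{Spec}^*(X)| - \sum_{s \in q^{-1/2}\operatorname{Spec}^*(X)} T_k(s),
$$
so $h_k \geq 0$ is equivalent to $\sum_{s \in q^{-1/2}\operatorname{Spec}^*(X)} T_k(s) \leq 2|\operatorname{Spec}^*(X)|$. To put this in the normalized form stated in the lemma, I would divide through by $2|\operatorname{Spec}^*(X)|$.

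The one thing that needs checking is that $|\operatorname{Spec}^*(X)| > 0$ so that the division is legitimate. This is immediate from (\ref{S*(X)}) combined with the standing assumption $n \geq 3$: in the nonbipartite case $|\operatorname{Spec}^*(X)| = n - 1 \geq 2$, and in the bipartite case $|\operatorname{Spec}^*(X)| = n - 2 \geq 1$.

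So the proof is essentially a single line: rearrange Proposition \ref{prop:hk} and use that $|\operatorname{Spec}^*(X)| \geq 1$. There is no real obstacle here; the lemma is just a convenient reformulation prepared for use in the subsequent arguments (presumably to compare the normalized Chebyshev sum against $1$, which aligns with the uniform bound $|T_k(s)| \leq 2$ valid for $s \in [-2,2]$, i.e.\ exactly the Ramanujan regime $|\lambda| \leq 2q^{1/2}$).
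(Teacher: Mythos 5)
Your proof is correct and matches the paper's, which simply states the lemma is immediate from Proposition \ref{prop:hk}; your extra check that $|{\rm Spec}^*(X)|\geq 1$ (so the division is legitimate) is a sound, if minor, addition.
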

\begin{proof}
Immediate from Proposition \ref{prop:hk}.
\end{proof}

\begin{lem}[\!\!\cite{chebyshev}]
\label{lem2:Tk}
$T_k(2\cos \theta)=2 \cos k\theta$ for all $k\geq 0$ and all real numbers $\theta$. 
\end{lem}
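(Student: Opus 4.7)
The plan is to reduce this identity to the previously established Lemma \ref{lem1:Tk}, which states that $T_k(x+x^{-1}) = x^k + x^{-k}$ for all $k \geq 0$. Since that lemma is quoted as valid for all nonzero complex $x$ (its derivation in the text uses it that way), the most economical route is to specialize to $x = e^{i\theta}$.

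First I would observe that for any real $\theta$, the substitution $x = e^{i\theta}$ gives $x + x^{-1} = e^{i\theta} + e^{-i\theta} = 2\cos\theta$ and $x^k + x^{-k} = e^{ik\theta} + e^{-ik\theta} = 2\cos k\theta$. Plugging into Lemma \ref{lem1:Tk} then yields $T_k(2\cos\theta) = 2\cos k\theta$ immediately, and the identity holds for all $k \geq 0$.

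If one prefers a self-contained argument that does not invoke complex numbers, the alternative is a direct induction on $k$ using the defining recurrence $T_{k+1}(y) = y\, T_k(y) - T_{k-1}(y)$. The base cases $T_0(2\cos\theta) = 2 = 2\cos 0$ and $T_1(2\cos\theta) = 2\cos\theta$ are immediate from $T_0(x)=2$ and $T_1(x)=x$. For the inductive step, assuming the claim for indices $k-1$ and $k$, one computes
\[
T_{k+1}(2\cos\theta) = 2\cos\theta \cdot 2\cos k\theta - 2\cos((k-1)\theta),
\]
and applies the product-to-sum identity $2\cos\theta\cos k\theta = \cos((k-1)\theta) + \cos((k+1)\theta)$ to obtain $2\cos((k+1)\theta)$.

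There is no real obstacle here; both approaches are entirely routine. The only thing to watch is consistency with the paper's normalization $T_0 = 2$ (rather than $T_0 = 1$), which is why the right-hand side is $2\cos k\theta$ rather than $\cos k\theta$ — this matches the factor $\frac{1}{2}T_k(2x)$ being the standard Chebyshev polynomial, as noted just before Lemma \ref{lem1:Tk}.
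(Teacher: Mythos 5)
Your proposal is correct, but note that the paper itself gives no proof of Lemma \ref{lem2:Tk} at all: it is imported as a known fact via the citation \cite{chebyshev}, so there is no in-paper argument to compare against. Both of your routes are valid and make the statement self-contained. The first one, specializing Lemma \ref{lem1:Tk} at $x=e^{i\theta}$, is particularly apt here because it is exactly the way the paper already uses Lemma \ref{lem1:Tk} with complex arguments (in the proof of Proposition \ref{prop:hk} the author writes $s=\alpha+\alpha^{-1}$ for a nonzero complex $\alpha$ and concludes $T_k(s)=\alpha^k+\alpha^{-k}$), so your appeal to the complex case requires no extra justification beyond what the paper has implicitly committed to; since $T_k$ is a polynomial, the identity $T_k(x+x^{-1})=x^k+x^{-k}$ holds identically for all nonzero complex $x$ once it holds for real $x$ in an interval. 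Your inductive alternative is equally sound: the recurrence direction $T_{k+1}(y)=yT_k(y)-T_{k-1}(y)$ matches the paper's defining relation $xT_k(x)=T_{k+1}(x)+T_{k-1}(x)$, the base cases use the paper's normalization $T_0=2$, $T_1(x)=x$ correctly, and the product-to-sum identity closes the induction. You were also right to flag the normalization: with $T_0=2$ the right-hand side is $2\cos k\theta$, consistent with the remark that $\tfrac{1}{2}T_k(2x)$ is the standard Chebyshev polynomial of the first kind.
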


\begin{lem}\label{lem5:Tk}
If $s$ is a real number with $|s|\leq 2$ then $|T_k(s)|\leq 2$ for all $k\geq 0$.
\end{lem}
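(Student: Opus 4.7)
The plan is to reduce the bound to the trigonometric identity in Lemma \ref{lem2:Tk}. Since $|s| \leq 2$, the value $s/2$ lies in $[-1,1]$, which is precisely the range of the cosine function on $[0,\pi]$. Hence there exists a real number $\theta$ with $s = 2\cos\theta$. This parametrization is the crucial observation: it converts an estimate on a polynomial to an estimate on a trigonometric function, where boundedness is trivial.

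Once the parametrization is in hand, Lemma \ref{lem2:Tk} gives $T_k(s) = T_k(2\cos\theta) = 2\cos(k\theta)$ for every $k \geq 0$. Since $|\cos(k\theta)| \leq 1$ for any real argument, we conclude $|T_k(s)| \leq 2$. There is essentially no obstacle here; the content of the lemma is merely the observation that Lemma \ref{lem2:Tk} already supplies an explicit uniform bound on the image of $T_k$ over the interval $[-2,2]$, and the only step worth spelling out is the surjectivity of $\theta \mapsto 2\cos\theta$ onto $[-2,2]$. No separate argument for the boundary cases $s = \pm 2$ is needed since they correspond to $\theta \in \{0,\pi\}$ and are already covered by the parametrization.
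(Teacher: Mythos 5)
Your proof is correct and takes exactly the same route as the paper, which simply cites Lemma \ref{lem2:Tk}; you have just made explicit the parametrization $s = 2\cos\theta$ that the paper leaves implicit. Nothing further is needed.
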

\begin{proof}
Immediate from Lemma \ref{lem2:Tk}.
\end{proof}

\begin{lem}\label{lem7:Tk}
If $X$ is Ramanujan then $|T_k(s)|\leq 2$ for all $k\geq 0$ and all $s\in q^{-1/2}{\rm Spec}^*(X)$.
\end{lem}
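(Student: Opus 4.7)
The plan is to reduce directly to Lemma \ref{lem5:Tk} by showing that each $s\in q^{-1/2}{\rm Spec}^*(X)$ is a real number in the interval $[-2,2]$. First I would note that since the adjacency matrix $A$ of $X$ is symmetric, every $\lambda\in {\rm Spec}^*(X)$ is real, and hence so is $s=q^{-1/2}\lambda$. Next, by the definition of Ramanujan recalled in the introduction, every nontrivial eigenvalue $\lambda$ of $X$ satisfies $|\lambda|\leq 2q^{1/2}$, which translates immediately into $|s|\leq 2$ for every $s\in q^{-1/2}{\rm Spec}^*(X)$.

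With these two observations in hand, the conclusion $|T_k(s)|\leq 2$ for all $k\geq 0$ is an immediate invocation of Lemma \ref{lem5:Tk}. There is really no obstacle here; the lemma is essentially a repackaging of the Ramanujan condition in the language of the Chebyshev polynomials $T_k$, tailored for use in subsequent arguments (presumably together with Lemma \ref{lem:hk>=0} to deduce $h_k\geq 0$ when $X$ is Ramanujan). The only thing to be careful about is remembering that the spectrum of $X$ lies in $\mathbb{R}$, so that Lemma \ref{lem2:Tk} applies through Lemma \ref{lem5:Tk}; one writes $s=2\cos\theta$ for some real $\theta$ and uses $T_k(s)=2\cos k\theta$.
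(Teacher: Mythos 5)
Your proof is correct and matches the paper's own argument: the paper likewise observes that $X$ is Ramanujan if and only if $|s|\leq 2$ for all $s\in q^{-1/2}{\rm Spec}^*(X)$ and then invokes Lemma \ref{lem5:Tk}. You merely spell out the realness of the spectrum explicitly, which the paper leaves implicit.
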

\begin{proof}
Since $X$ is Ramanujan if and only if $|s|\leq 2$ for all $s\in q^{-1/2}{\rm Spec}^*(X)$, the lemma is immediate from Lemma \ref{lem5:Tk}.
\end{proof}

\begin{lem}
[\!\!\cite{chebyshev}]
\label{lem4:Tk}
$
T_k(x)=
\displaystyle{
2 
\left(\frac{x}{2}\right)^k
\sum_{i=0}^{\lfloor \frac{k}{2}\rfloor}
{k\choose 2i}
\left(
1-\left(\frac{x}{2}\right)^{-2}
\right)^i
}
$
for all $k\geq 0$.
\end{lem}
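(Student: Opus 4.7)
The plan is to reduce the identity to a direct binomial expansion, using Lemma \ref{lem1:Tk} as the starting point. Recall that by Lemma \ref{lem1:Tk} we have $T_k(\alpha+\alpha^{-1})=\alpha^k+\alpha^{-k}$ for every nonzero $\alpha$. The strategy is to solve $x=\alpha+\alpha^{-1}$ for $\alpha$ in terms of $x/2$, substitute into $\alpha^k+\alpha^{-k}$, expand via the binomial theorem, and then recognize the right-hand side.

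First, set $y=x/2$. Writing $x=\alpha+\alpha^{-1}$ is equivalent to $\alpha^2-2y\alpha+1=0$, so we may take $\alpha=y+\sqrt{y^2-1}$ and $\alpha^{-1}=y-\sqrt{y^2-1}$. Hence
$$
T_k(x)=\alpha^k+\alpha^{-k}=\bigl(y+\sqrt{y^2-1}\bigr)^k+\bigl(y-\sqrt{y^2-1}\bigr)^k.
$$
Applying the binomial theorem term-by-term, the odd powers of $\sqrt{y^2-1}$ cancel and the even powers double, so
$$
T_k(x)=2\sum_{i=0}^{\lfloor k/2\rfloor}\binom{k}{2i}y^{k-2i}(y^2-1)^i.
$$
Factoring out $y^k$ gives $y^{k-2i}(y^2-1)^i=y^k\bigl(1-y^{-2}\bigr)^i$, after which substituting back $y=x/2$ yields exactly the claimed formula. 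The identity as written is a polynomial identity in $x$, but it is routine to check that the expression $2(x/2)^k(1-(x/2)^{-2})^i$ is polynomial once the sum over $i$ is assembled, so no care is needed regarding the apparent singularity at $x=0$.

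There is no real obstacle here: the only subtle point is the square-root choice when defining $\alpha$, which is harmless because the final expression is symmetric in $\pm\sqrt{y^2-1}$ (thanks to the cancellation of odd powers), and the resulting formula is a polynomial identity that holds for all $x$ by continuation from $x\neq 0$. Alternatively, one can prove the identity purely formally by induction on $k$ using the recurrence $T_{k+1}(x)=xT_k(x)-T_{k-1}(x)$ together with Pascal-type identities for the binomial coefficients $\binom{k}{2i}$, but the binomial-theorem derivation above is the cleanest route.
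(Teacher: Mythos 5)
The paper does not prove this lemma at all; it is imported verbatim from the Chebyshev polynomial literature via the citation, so there is no in-paper argument to compare against. Your proposal is a correct, self-contained derivation of the cited identity, and it uses exactly the device the paper itself relies on elsewhere (the substitution $x=\alpha+\alpha^{-1}$ with $T_k(\alpha+\alpha^{-1})=\alpha^k+\alpha^{-k}$, as in the proof of Proposition \ref{prop:hk}). The chain of steps checks out: with $y=x/2$ the roots of $\alpha^2-2y\alpha+1=0$ are $y\pm\sqrt{y^2-1}$, whose product is $1$, so $\alpha^{-1}=y-\sqrt{y^2-1}$; the binomial expansion kills the odd powers of $\sqrt{y^2-1}$ and doubles the even ones, giving $2\sum_{i}\binom{k}{2i}y^{k-2i}(y^2-1)^i$; and factoring out $y^k$ recovers the stated form. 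Your handling of the branch choice and of the apparent pole at $x=0$ is sound: the expanded expression is visibly a polynomial, and agreement on the infinitely many real $x$ with $|x|>2$ (where $\alpha$ is real and nonzero) forces the polynomial identity everywhere. One small simplification: you do not even need to ``assemble the sum'' to see polynomiality, since each individual summand $2(x/2)^k\bigl(1-(x/2)^{-2}\bigr)^i=2(x/2)^{k-2i}\bigl((x/2)^2-1\bigr)^i$ is already a polynomial because $k-2i\geq 0$ for every $i$ in the range of summation. The alternative inductive route you sketch via $T_{k+1}(x)=xT_k(x)-T_{k-1}(x)$ would also work but is not needed.
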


\begin{lem}\label{lem6:Tk}
If $s$ is a real number with $|s|> 2$ then $T_k(s)>0$ for all even $k\geq 0$.
\end{lem}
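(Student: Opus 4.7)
The plan is to exploit Lemma \ref{lem1:Tk}, which gives the closed form $T_k(x+x^{-1}) = x^k + x^{-k}$. The key observation is that for real $s$ with $|s|>2$, the equation $\alpha + \alpha^{-1} = s$ has a real nonzero solution $\alpha$: concretely $\alpha = (s + \sqrt{s^2-4})/2$, which is real since $s^2 - 4 > 0$, and nonzero since $\alpha \alpha^{-1} = 1$. (If $s>2$ then $\alpha > 1$; if $s < -2$ then $\alpha < -1$.)

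With such a real $\alpha$ in hand, Lemma \ref{lem1:Tk} immediately gives $T_k(s) = \alpha^k + \alpha^{-k}$. Now I would split on parity: for $k=0$, $T_0(s) = 2 > 0$ is direct from the definition; for even $k \geq 2$, both $\alpha^k$ and $\alpha^{-k}$ are positive powers of the real number $\alpha^2 > 0$, hence strictly positive, and so their sum is strictly positive as well. This yields $T_k(s) > 0$ as claimed.

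As an alternative route (which may be the one the author intends, since Lemma \ref{lem4:Tk} appears immediately before the statement), one could instead expand $T_k(s)$ via Lemma \ref{lem4:Tk}. The assumption $|s|>2$ gives $(s/2)^{-2} < 1$, so every factor $(1-(s/2)^{-2})^i$ is nonnegative, and the $i=0$ term contributes a strictly positive value. For even $k$ the prefactor $2(s/2)^k$ is strictly positive, and positivity of $T_k(s)$ follows.

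There is no real obstacle here: the content is entirely in choosing the right representation of $s$ so that positivity becomes manifest. The only mild care needed is to note the reality (not just complex-existence) of $\alpha$ under the hypothesis $|s|>2$, which is why the statement fails for $|s|\leq 2$ (there $\alpha$ lies on the unit circle and $\alpha^k + \alpha^{-k}$ can be negative).
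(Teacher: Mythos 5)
Your proposal is correct, but your primary argument takes a different route from the paper: the paper's entire proof is ``Immediate from Lemma \ref{lem4:Tk}'', i.e.\ exactly the alternative you sketch at the end (for $|s|>2$ one has $0<1-(s/2)^{-2}<1$, so every summand is nonnegative, the $i=0$ term equals $1$, and the prefactor $2(s/2)^k$ is positive for even $k$). Your main argument instead writes $s=\alpha+\alpha^{-1}$ with $\alpha$ real and nonzero and invokes Lemma \ref{lem1:Tk} to get $T_k(s)=\alpha^k+\alpha^{-k}>0$ for even $k$; this is equally valid and in fact yields slightly more, namely $T_k(s)>2$ for even $k\geq 2$ (since $\alpha^k\neq\alpha^{-k}$ when $|\alpha|\neq 1$, AM--GM gives a strict inequality), whereas the paper's route gives only positivity --- though the paper's route has the virtue of reusing the lemma it has just stated, keeping the deduction one line long. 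One small slip in your parenthetical: for $s<-2$ your choice $\alpha=(s+\sqrt{s^2-4})/2$ lies in $(-1,0)$, not in $(-\infty,-1)$ (e.g.\ $s=-3$ gives $\alpha\approx-0.38$; it is the other root $(s-\sqrt{s^2-4})/2$ that is less than $-1$). This is immaterial to your proof, since all you use is that $\alpha$ is real and nonzero, so that $\alpha^k+\alpha^{-k}$ is a sum of two positive numbers when $k$ is even; but you should state the location of $\alpha$ correctly or simply omit it.
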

\begin{proof}
Immediate from Lemma \ref{lem4:Tk}.
\end{proof}

\begin{prop}\label{prop1}
Let $S$ denote a nonempty finite multiset consisting of real numbers. 
If  there is a positive even integer $k$ with
\begin{gather}\label{prop1:assumption}
\frac{1}{2|S|}\sum_{s\in S} T_k(s)\leq 1
\end{gather}
then $
|s|\leq 
1+\sqrt[k]{4|S|-3}
$
for all $s\in S$.
\end{prop}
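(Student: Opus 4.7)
My plan is to argue by contrapositive. Suppose some $s_0\in S$ satisfies $|s_0|>1+r$, where $r:=\sqrt[k]{4|S|-3}$; I will deduce $(2|S|)^{-1}\sum_{s\in S}T_k(s)>1$, contradicting the hypothesis. Since $|S|\ge 1$, we have $r\ge 1$, so $1+r\ge 2$. Because $k$ is even, Lemma \ref{lem1:Tk} makes $T_k$ an even function, so I may assume $s_0>1+r$. Setting $t:=s_0-1$, then $t>r\ge 1$.

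The crux is the following auxiliary inequality: for every real $r'\ge 1$,
\[
T_k(1+r')\;\ge\;(r')^k+1.
\]
To prove it, let $\alpha\ge 1$ be the larger root of $\alpha^2-(1+r')\alpha+1=0$, so Lemma \ref{lem1:Tk} gives $T_k(1+r')=\alpha^k+\alpha^{-k}$. Rewriting the characteristic equation as $\alpha-1+\alpha^{-1}=r'$ yields the identity
\[
\alpha-r'\;=\;1-\alpha^{-1},
\]
which is $\ge 0$ since $\alpha\ge 1$. Applying the standard telescoping factorizations of $\alpha^k-(r')^k$ and $1-\alpha^{-k}$ and exploiting that they share this common factor, I rewrite
\[
\alpha^k+\alpha^{-k}-(r')^k-1\;=\;(\alpha-r')\left(\sum_{j=0}^{k-1}\alpha^{k-1-j}(r')^j\;-\;\sum_{j=0}^{k-1}\alpha^{-j}\right).
\]
Since $\alpha,r'\ge 1$, each of the $k$ summands in the first inner sum is $\ge 1$ and each of the $k$ summands in the second is $\le 1$; hence the bracketed expression is nonnegative, proving the auxiliary inequality.

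Applying it with $r'=t$ yields $T_k(s_0)=T_k(1+t)\ge t^k+1>r^k+1=4|S|-2$. Lemmas \ref{lem5:Tk} and \ref{lem6:Tk} (the latter using that $k$ is even) jointly give $T_k(s)\ge -2$ for every real $s$, so
\[
\sum_{s\in S}T_k(s)\;>\;(4|S|-2)+(|S|-1)(-2)\;=\;2|S|,
\]
which contradicts the hypothesis $(2|S|)^{-1}\sum_{s\in S}T_k(s)\le 1$.

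The main obstacle is the auxiliary inequality $T_k(1+r')\ge (r')^k+1$; the trick is spotting that the characteristic equation $\alpha^2-(1+r')\alpha+1=0$ rearranges to $\alpha-r'=1-\alpha^{-1}$, a single common factor that unifies the two telescoping sums and reduces the claim to a trivial term-by-term comparison between two sums of $k$ terms, one bounded below by $1$ and the other bounded above by $1$.
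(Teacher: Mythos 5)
Your proof is correct, and it takes a genuinely different route from the paper's. The paper also argues by contradiction from a hypothetical $|t|>1+\sqrt[k]{4|S|-3}$ and also uses the floor bound $T_k(s)\geq -2$ for all real $s$ (Lemmas \ref{lem5:Tk} and \ref{lem6:Tk}), but its lower bound on $T_k(t)$ comes from the expansion in Lemma \ref{lem4:Tk}: writing $1+\sqrt[k]{4|S|-3}=2/(1-\varepsilon)$ with $\varepsilon=(\sqrt[k]{4|S|-3}-1)/(\sqrt[k]{4|S|-3}+1)$, it bounds $1-(t/2)^{-2}>\varepsilon^2$, resums the binomial series to $(t/2)^k\bigl((1+\varepsilon)^k+(1-\varepsilon)^k\bigr)/2$, and closes by checking that the two resulting bounds on $(t/2)^k$ coincide exactly, an equality verification it leaves as routine. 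You replace all of this $\varepsilon$-bookkeeping with the self-contained inequality $T_k(1+r')\geq (r')^k+1$ for $r'\geq 1$, proved via Lemma \ref{lem1:Tk} and the identity $\alpha-r'=1-\alpha^{-1}$, which makes the two telescoping factorizations of $\alpha^k-(r')^k$ and $\alpha^{-k}-1$ share a common factor and reduces everything to comparing two sums of $k$ terms, one termwise $\geq 1$ and the other termwise $\leq 1$; I checked the factorization and it is correct. This avoids Lemma \ref{lem4:Tk} entirely and makes the threshold transparent: $T_k(t)>r^k+1=4|S|-2$ for $t>1+r$ (your inequality is even sharp at $r'=1$, where $T_k(2)=2$), after which the counting step $\sum_{s\in S}T_k(s)>(4|S|-2)-2(|S|-1)=2|S|$ is the same as the paper's. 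Two minor points, neither a gap: your appeal to Lemma \ref{lem1:Tk} for the evenness of $T_k$ when $k$ is even is legitimate (substitute $-x$ for $x$ and note the identity holds for infinitely many arguments; alternatively cite Lemma \ref{lem3:Tk}, where only exponents $k-2i$ occur), and the larger root $\alpha$ is real with $\alpha\geq 1$ because $r'\geq 1$ forces $1+r'\geq 2$, so the discriminant of $\alpha^2-(1+r')\alpha+1$ is nonnegative and the two positive roots have product $1$.
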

\begin{proof}
For convenience let 
\begin{gather}\label{epsilon}
\varepsilon=
\frac{\sqrt[k]{4|S|-3}-1}{\sqrt[k]{4|S|-3}+1}.
\end{gather}
Suppose on the contrary that there is a real number $t\in S$ with 
\begin{gather}\label{contrary:|t|}
|t|>1+\sqrt[k]{4|S|-3}=\frac{2}{1-\varepsilon}.
\end{gather}
Using (\ref{contrary:|t|}) yields that 
$
1-\left(\frac{t}{2}\right)^{-2}
>
1-(1-\varepsilon)^2
=
2\varepsilon-\varepsilon^2$. 
Since $0<\varepsilon<1$ we have 
$
\varepsilon^2<2\varepsilon-\varepsilon^2$. 
Combined with Lemma \ref{lem4:Tk} this implies 
\begin{align*}
\frac{1}{2}T_k(t)
&>
\left(\frac{t}{2}\right)^k
\sum_{i=0}^{\lfloor \frac{k}{2}\rfloor}
{k\choose 2i} 
\varepsilon^{2i}
=
\left(\frac{t}{2}\right)^k
\frac{(1+\varepsilon)^k+(1-\varepsilon)^k}{2}.
\end{align*}
By Lemmas \ref{lem5:Tk} and \ref{lem6:Tk} we have 
$
T_k(s)\geq -2$ for all $s\in S\setminus\{t\}$.

Combining (\ref{prop1:assumption}) with the above comments yields that 
$$
|S|\geq \frac{1}{2}  \sum_{s\in S} T_k(s)
=\frac{1}{2} T_k(t)+ \frac{1}{2} \sum_{s\in S\setminus\{t\}}  T_k(s)
>
\left(\frac{t}{2}\right)^k
\frac{(1+\varepsilon)^k+(1-\varepsilon)^k}{2}-(|S|-1).
$$
It leads to  
\begin{gather}\label{|t|_upperbound}
\left(\frac{t}{2}\right)^k< \frac{4|S|-2}{(1+\varepsilon)^k+(1-\varepsilon)^k}.
\end{gather}
Since $k$ is even the inequality (\ref{contrary:|t|}) implies that 
\begin{gather}\label{|t|_lowerbound}
\left(\frac{t}{2}\right)^k>\frac{1}{(1-\varepsilon)^k}.
\end{gather}
Combining (\ref{|t|_upperbound}) and (\ref{|t|_lowerbound}) we see that 
\begin{gather}\label{epsilon_ineq}
\frac{1}{(1-\varepsilon)^k}<\frac{4|S|-2}{(1+\varepsilon)^k+(1-\varepsilon)^k}.
\end{gather}
Using the setting (\ref{epsilon}) it is routine to verify that both sides of (\ref{epsilon_ineq}) are equal, 
a contradiction. The proposition follows.
\end{proof}

\noindent{\it Proof of Theorem \ref{thm:even}.} 
By Lemma \ref{lem:hk>=0}, when $X$ is nonbipartite the result follows by applying Proposition \ref{prop1} with $S$ replaced by $q^{-1/2}{\rm Spec}^*(X)$.

Note that the number of zeros in ${\rm Spec}^*(X)$ is even if the regular graph $X$ is bipartite. 
Since $k$ is even it follows from Lemma \ref{lem1:Tk} that $T_k(x)$ is an even function. Combined with Lemma \ref{lem:hk>=0}, when $X$ is bipartite the result follows by applying Proposition \ref{prop1} with $S$ chosen as the multiset of all positive numbers and a half number of zeros in $q^{-1/2}{\rm Spec}^*(X)$. 
 \hfill $\square$

\medskip

\noindent{\it Proof of Theorem \ref{thm:Li}.}
(i) $\Rightarrow$ (ii):  Combine Lemmas \ref{lem:hk>=0} and  \ref{lem7:Tk}.

(ii) $\Rightarrow$ (iii): It is obvious.

(iii) $\Rightarrow$ (i): Immediate from Theorem \ref{thm:even}. 
\hfill $\square$

\begin{lem}\label{lem:hk_upper}
If $X$ is Ramanujan then 
$$
h_k
\leq 
\left\{
\begin{array}{ll}
4(n-1)
\qquad 
\hbox{if $X$ is nonbipartite},
\\
4(n-2)
\qquad 
\hbox{if $X$ is bipartite}
\end{array}
\right.
$$
for all $k\geq 1$.
\end{lem}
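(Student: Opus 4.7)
The plan is to combine Proposition \ref{prop:hk} with the uniform bound on Chebyshev values established in Lemma \ref{lem7:Tk}. Specifically, starting from the identity
$$
h_k = 2|{\rm Spec}^*(X)| - \sum_{s\in q^{-1/2}{\rm Spec}^*(X)} T_k(s),
$$
one obtains an upper bound on $h_k$ by producing a lower bound on the sum. When $X$ is Ramanujan, Lemma \ref{lem7:Tk} gives $T_k(s) \geq -2$ for every $s$ in the multiset $q^{-1/2}{\rm Spec}^*(X)$ and every $k \geq 0$, so the sum is at least $-2|{\rm Spec}^*(X)|$, and therefore $h_k \leq 4|{\rm Spec}^*(X)|$.

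It then remains to read off $|{\rm Spec}^*(X)|$ from (\ref{S*(X)}). Since ${\rm Spec}(X)$ contains all $n$ eigenvalues counted with multiplicity, and since the trivial eigenvalue $q+1$ appears with multiplicity one (and $-(q+1)$ similarly with multiplicity one in the bipartite case), we have $|{\rm Spec}^*(X)| = n-1$ when $X$ is nonbipartite and $|{\rm Spec}^*(X)| = n-2$ when $X$ is bipartite. Substituting these values yields the two asserted bounds $4(n-1)$ and $4(n-2)$, respectively.

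There is no real obstacle here; the lemma is a direct consequence of the formula in Proposition \ref{prop:hk} together with the Ramanujan hypothesis funneled through Lemma \ref{lem7:Tk}. The only point worth articulating explicitly is that the bound $|T_k(s)| \leq 2$ is uniform in $k$, so the resulting upper bound on $h_k$ is likewise uniform in $k$, which is exactly the content of the lemma.
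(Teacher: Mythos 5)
Your proof is correct and follows exactly the paper's own route: Proposition \ref{prop:hk} plus the uniform bound $|T_k(s)|\leq 2$ from Lemma \ref{lem7:Tk} gives $h_k\leq 4|{\rm Spec}^*(X)|$, and then (\ref{S*(X)}) converts $|{\rm Spec}^*(X)|$ to $n-1$ or $n-2$. No differences worth noting.
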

\begin{proof}
By Proposition \ref{prop:hk} and Lemma \ref{lem7:Tk} the coefficient $h_k\leq 4|{\rm Spec}^*(X)|$ for all $k\geq 1$. Hence the lemma follows by (\ref{S*(X)}).
\end{proof}

\noindent{\it Proof of Theorem \ref{thm:Hasse-Weil}.}  Combine Theorem \ref{thm:Li}(i), (ii) and Proposition \ref{prop:formula1} along with Lemma \ref{lem:hk_upper}. 
\hfill $\square$

\section{Behavior of $h_{2k}$}\label{s:behavior}

We end this paper with a remark on $\{h_{2k}\}_{k=1}^\infty$ under the assumption that $X$ is not Ramanujan. 
Let $S$ denote the set of all nonzero complex numbers $\alpha$ with $\alpha+\alpha^{-1}\in q^{-1/2}{\rm Spec}^*(X)$. 
For those $s\in q^{-1/2}{\rm Spec}^*(X)$ with $|s|\leq 2$ the corresponding numbers $\alpha\in S$ have the same absolute value $1$. By the assumption that $X$ is not Ramanujan there exists an $s\in q^{-1/2}{\rm Spec}^*(X)$ with $|s|>2$ and the corresponding numbers $\alpha \in S$ are real and $\alpha\not=\pm 1$.
Let 
$$
\mu=\max_{\alpha\in S} |\alpha|>1. 
$$
Since the function $f(x)=x+x^{-1}$ is strictly increasing on $(1,\infty)$, it follows that 
\begin{align*}
\mu+\mu^{-1}
&=\max_{\alpha\in S}|\alpha|+|\alpha|^{-1}
=\max_{s\in q^{-1/2}{\rm Spec}^*(X)} |s|.
\end{align*}
Using Lemma \ref{lem1:Tk} yields that 
$$
\lim_{k\to \infty}\frac{T_{2k}(\alpha+\alpha^{-1})}{\mu^{2k}}
=\left\{
\begin{array}{ll}
0 
\qquad 
&\hbox{if $\mu>|\alpha|$ and $\mu>|\alpha|^{-1}$},
\\
1
\qquad 
&\hbox{else}
\end{array}
\right.
$$
for all $\alpha\in S$. It follows from Proposition \ref{prop:hk} that 
$h_{2k}$ is asymptotic to $-m \mu^{2k}$ as $k$ approaches to $\infty$, 
where $m$ is the number of $s\in q^{-1/2}{\rm Spec}^*(X)$ with $|s|=\mu+\mu^{-1}$. Therefore the following result holds:

\begin{thm}\label{thm:h2k}
If $X$ is not Ramanujan then 
\begin{gather*}\label{h2k}
q^{-\frac{1}{2}}\max |\lambda|
=\lim_{k\to \infty}\sqrt{\frac{h_{2k+2}}{h_{2k}}}+\sqrt{\frac{h_{2k}}{h_{2k+2}}}
\end{gather*}
where $\max$ is over all nontrivial eigenvalues $\lambda$ of $X$. 
\end{thm}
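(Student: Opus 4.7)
The plan is to extract the limit directly from the asymptotic relation for $h_{2k}$ that is developed in the paragraph immediately preceding the theorem. The decisive ingredient is the fact that $h_{2k}\sim -m\mu^{2k}$ as $k\to\infty$, where $\mu>1$ (since $X$ is not Ramanujan) and $m\geq 1$ is the number of $s\in q^{-1/2}\mathrm{Spec}^*(X)$ attaining $|s|=\mu+\mu^{-1}$. Everything else is elementary bookkeeping on top of this asymptotic.

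First I would confirm well-definedness: since $m\geq 1$ and $\mu>1$, the asymptotic forces $h_{2k}<0$ for all sufficiently large $k$, so the ratio $h_{2k+2}/h_{2k}$ is a positive real number and the two square roots in the theorem statement are legitimate. Second, dividing the asymptotic for $h_{2k+2}$ by that for $h_{2k}$ gives $h_{2k+2}/h_{2k}\to \mu^2$. Taking positive square roots yields $\sqrt{h_{2k+2}/h_{2k}}\to\mu$ and $\sqrt{h_{2k}/h_{2k+2}}\to\mu^{-1}$, so the right-hand side of the stated formula converges to $\mu+\mu^{-1}$. Finally, invoking the strict monotonicity of $f(x)=x+x^{-1}$ on $(1,\infty)$ (already used in the preamble), I identify $\mu+\mu^{-1}=\max_{s\in q^{-1/2}\mathrm{Spec}^*(X)}|s|=q^{-1/2}\max|\lambda|$, where the last maximum ranges over nontrivial eigenvalues of $X$.

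The only step that calls for a touch of care is making the asymptotic $h_{2k}\sim -m\mu^{2k}$ rigorous. Using Proposition \ref{prop:hk}, I would write
$$
\frac{h_{2k}}{\mu^{2k}}=\frac{2|\mathrm{Spec}^*(X)|}{\mu^{2k}}-\sum_{s\in q^{-1/2}\mathrm{Spec}^*(X)}\frac{T_{2k}(s)}{\mu^{2k}},
$$
observe that the first term vanishes since $\mu>1$, and apply the termwise limits from Lemma \ref{lem1:Tk} and the case analysis in the preamble: each $s$ with $|s|<\mu+\mu^{-1}$ contributes $0$ to the sum, while each of the $m$ summands with $|s|=\mu+\mu^{-1}$ contributes $1$. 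Hence $h_{2k}/\mu^{2k}\to -m$, and the theorem follows by routine manipulation of limits.
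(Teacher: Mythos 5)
Your proposal is correct and follows essentially the same route as the paper: the paper's own argument is precisely the derivation of $h_{2k}\sim -m\mu^{2k}$ from Proposition \ref{prop:hk} and Lemma \ref{lem1:Tk}, followed by the identification $\mu+\mu^{-1}=q^{-\frac{1}{2}}\max|\lambda|$ via the monotonicity of $x+x^{-1}$ on $(1,\infty)$. Your added check that $h_{2k}<0$ for large $k$ (so the square roots are well defined) is a small point the paper leaves implicit, but it does not change the substance of the argument.
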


\subsection*{Acknowledgements}

The research is supported by the Ministry of Science and Technology of Taiwan under the project MOST 106-2628-M-008-001-MY4.

\bibliographystyle{amsplain}
\bibliography{MP}

\end{document}